\newtheorem{thm}{Theorem}
\newtheorem{defn}{Definition}[section]
\newtheorem{lem}{Lemma}
\newtheorem{rem}{Remark}
\newtheorem{exmp}{Example}[section]
\title{Constructing Stochastic Matrices for Weighted Averaging in Gossip Networks}
    \author{Erkan Bayram\quad  Mohamed-Ali Belabbas\\
	\normalsize Coordinated Science Laboratory\\
	\normalsize University of Illinois Urbana-Champaign, Urbana, IL, USA\\
	\normalsize  \emph{(ebayram2,belabbas)@illinois.edu}}
\begin{document}
\date{}
\maketitle





                                                  

\begin{abstract}                          
The convergence of the gossip process has been extensively studied; however, algorithms that generate a set of stochastic matrices, the infinite product of which converges to a rank-one matrix determined by a given weight vector, have been less explored. In this work, we propose an algorithm for constructing (local) stochastic matrices based on a given gossip network topology and a set of weights for averaging across different consensus clusters, ensuring that the gossip process converges to a finite limit set.

\textbf{Keywords:} Matrix Realization; Consensus; Gossiping; Non-Homogeneous Markov Processes; Holonomy; Convergence of Matrix Products

\end{abstract}

\section{Introduction}

Distributed control systems are fundamental in modern computing, enabling scalable and efficient data processing across multiple agents~\citep{bayram2024ageCoded}. Distributed averaging plays a key role in applications such as distributed optimization and decentralized learning, where each agent in a network contributes to consensus value based on the agreed-upon weights. One essential protocol for distributed averaging is the gossip process, in which two adjacent agents in a network communicate and update their states based on the matrix weight $A_e$ (a row-stochastic matrix) associated with the edge $e$ connecting them. These two agents form a gossiping pair, and the overall process is referred to as a {\em weighted gossip process}~\citep{bayram2024age,chen2022gossip}. This process is closely related to a non-homogeneous Markov process, where each edge has a different weight matrix.

This formulation reduces the problem to analyzing the infinite products of stochastic matrices taken from a finite set. When agents have vector-valued states, the ensemble of agents, or more generally, the set of the entries of their state vectors, can be partitioned into subvectors such that all the subvectors in the same partition reach consensus. This process is called {\em multiple consensus} (i.e. class-ergodicity). The elements of this partition are referred to as a {\em consensus clusters} ~\citep{bolouki2015consensus,touri2012approximations}. As a shorthand, we refer to the entries of the state-vector of an agent as the {\em entries} of the agent.

While prior works have primarily focused on conditions under which the product of stochastic matrices converges to a rank-one matrix (or principal block matrices with rank one, i.e. multiple-consensus), a less explored but critical problem is the realization of gossip matrices for a given gossip network. Specifically, for a given network topology (i.e., a graph) and a set of weights for averaging, constructing a meaningful set of stochastic matrices for gossip process, the infinite product of which converges to finite limit set remains an open problem. In this work, we propose an algorithm to construct (i.e., realize) the set of stochastic matrices $A_e$ that govern the gossip process such that the infinite product of these matrices converges to multiple consensus for a given weight vector (which sets the averaging weights at consensus) and a given partition of states that constitute the consensus clusters.

The existence and convergence of such products have been extensively studied in various contexts, including Lyapunov function-based methods \citep{nedic2016convergence,fagnani2008randomized}, consensus  constrained by network topologies \citep{Morse_etal2008ReachingConsensus,  Morse_etAl2008Dynamically,   ren2005consensus}, continuous-time models~\citep{hendrickx2012convergence}, and ergodic theory~\citep{touri2010ergodicity}.

At the same time, the problem of realizing of stochastic matrices for a given weight vector, serving as the left eigenvector of the infinite product, is closely linked to constructing stochastic matrices with a known spectrum \citep{kolmogorov1937markov}. \cite{dmitriev1946characteristic} and {\cite{karpelevich1951characteristic} impose necessary conditions on the spectrum of row-stochastic matrices but do not provide explicit realization algorithms. \cite{johnson2017matricial} offer a realization method, but it is limited to Karpelevic arcs. 

Given this setup, several key questions remain unresolved:
\begin{enumerate}  
    \item Under what topological conditions can two agents belong to the same consensus cluster? More generally, can different entries of distinct agents belong to the same consensus cluster?
    \item Given a graph and a weight vector, what is the way to construct (i.e. realize) a finite set of stochastic matrices $\mathcal{A}=\{A_{e_1},A_{e_2},\cdots,A_{e_\ell}\}$ such that the infinite product of these matrices in some order:
    \begin{align}
        \lim_{k \to \infty} A_k \cdots  A_1 \mbox{ with } A_i \in {\cal A}\\[-1.75em]
    \end{align}
    converges to a limit set with finite cardinality, ensuring the desired consensus clusters and the weights of average?
\end{enumerate}

The topology of the given (communication) graph $G$ inherently restricts the formation of consensus clusters. To address the first question, we introduce the so-called derived graph of $G$ on the elements of the partition (i.e., the set of indices labeling each consensus cluster). This derived graph provides a method to test whether a user-defined partition of the entries of the agents on $G$ forms an admissible set of consensus clusters.




Recent works such as \citep{chen2022gossip} and \citep{bayram2023vector} borrow the notion of holonomy from geometric control. The notion of holonomy in~\cite{bayram2023vector} refers to the structure induced by a set of stochastic matrices acting on a given weight vector around a cycle. In particular, when this set of matrices exhibits finite orbit sets, meaning that iterating the update process results in a cyclic progression through a finite set of weight vectors, this provides a powerful algebraic tool for analyzing the long-term behavior of the system. Both~\cite{chen2022gossip} and \citep{bayram2023vector} established sufficient conditions (introduced as $w$-holonomy), for the convergence of gossip processes to a finite limit set. To address to the second question, for a given gossip network $G$, a partition for the consensus clusters and a set of weights for averaging, we provide an algorithm to construct a set of (local) stochastic matrices such that they are $w$-holonomic for the graph $G$. From the result~\citep{bayram2023vector}, such a set of local stochastic matrices (i.e. $w$-holonomic for $G$) ensures that the gossip process converges to a finite limit set.

{ We summarize our main contributions as follows:}
\begin{itemize} \item We propose a method to test whether a given partition of the entries of state vector on the network forms an admissible set of consensus clusters. 
\item For a given graph, weight vector, and consensus cluster partition, we propose an algorithm to construct local stochastic matrices that govern the gossip process, ensuring convergence to multiple consensus ,where the clusters align with the given partition and the averaging weights match the specified vector.
\item We provide a solution to the open problem of realizing stochastic matrices with a known left eigenvector corresponding to eigenvalue $1$.
\item Our work ensures that the gossip process converges to a limit set with finite cardinality, thus enabling the design of efficient distributed control systems.
\end{itemize}

\textbf{Notation and convention.} We denote by $G=(V,E)$ an undirected graph, with $V=\{v_1,\ldots,v_{|V|}\}$ the node set and $E=\{e_1,\ldots,e_{|E|}\}$ the edge set. The edge linking nodes $v_i$ and $v_j$ is denoted by $(v_i, v_j)$, a self loop is denoted by $(v_i , v_i)$. We call $G$ {\em simple}  if it has no self-loops. Given a sequence of edges $\gamma = e_1 \cdots  e_k$ in $E$, a node $v \in V$ is called \textbf{covered} by $\gamma$  if it is incident to an edge in $\gamma$. A pointed cycle in \( \vec{G} \) is a walk \( v_{i_1} v_{i_2} \cdots v_{i_k} v_{i_1} \) with basepoint \( v_{i_1} \). Let \( \vec{\mathcal{C}} \) be the set of all pointed cycles in \( \vec{G} \). We define cycles as equivalence classes of pointed cycles that visit the same vertices in the same cyclic order and denote this set also by \( \vec{\mathcal{C}} \).



{A vector $p \in \mathbb{R}^n$ is a probability vector if $p_i \geq 0$ and $\sum_{i=1}^{n} p_i = 1$. The set of such vectors, the $(n-1)$-simplex $\Delta^{n-1}$, has interior $\operatorname{int} \Delta^{n-1}$ in the Euclidean topology, where all entries of $p$ are strictly positive.}

\section{Preliminaries}

\textbf{Gossip Process.} Consider an undirected simple graph $G=(V,E)$ with $n$ nodes. Each node represents an agent, and each agents' state is a vector in $\mathbb{R}^m$. We denote the state vector of the agent $i$ at time $t$ by ${x^i(t)}= \left[ x_1^i(t) , x_2^i(t) ,\ldots , x_m^i(t) \right]^\top \in \mathbb{R}^m $, where $x^i_k(t)$ is the $k$th entry of the agent $i$. The state of the system is the concatenation of the agents' states
$$
x(t)= [x^1(t)^\top x^2(t)^\top \cdots x^n(t)^\top]^\top \in \mathbb{R}^{nm}.
$$
The stochastic process we analyze here is described by sequences of edges $\gamma=e_{i_1}\cdots e_{i_t} \cdots$ in $G$ with the convention that {if $e_{i_t}=(v_i,v_j)$, then agents $i$ and $j$ update their states according to a row stochastic matrix ${A_{ij}}$, called {\em local stochastic matrix}. 
Then, the gossip process on edge $e_{i_t}=(v_i,v_j)$ at time $t$ is given by } 
\begin{equation}\label{eqn:gossip_process}
x(t+1) = {A}_{ij} x(t).  
\end{equation}
The matrix $A_{ij}$ is an $nm$-dimensional stochastic matrix such that the rows/columns corresponding to the states of agent $i$ and agent $j$ is the principal submatrix $\tilde{A}_{ij}$ and the rows/columns corresponding to the other agents is the identity matrix. For example, the local stochastic matrix $A_{12}$, which is associated with the edge $(v_1,v_2)$, is given by
\begin{equation}\label{block}
A_{12} = 
\begin{bmatrix}
\tilde{A}_{12} & 0_{2m \times (n-2)m}\\
0_{(n-2)m \times 2m} & I_{(n-2)m \times (n-2)m}
\end{bmatrix}
\end{equation}
 We assume here that $A_{ij} = A_{ji}$. For a simple undirected graph $G$, the directed graph $\vec{G} = (V, \vec{E})$ is obtained by bidirectionalizing the edges of $G$: for each edge $(v_i, v_j)$ in $G$ with $i \neq j$, we add directed edges $v_iv_j$ and $v_jv_i$ in $\vec{G}$. Thus, in $\vec{G}$, each edge is associated with $A_{ij}$ for both directions, meaning the graph $G$ is a matrix-weighted graph. An example of a matrix-weighted graph on 7 nodes is shown in Figure~\ref{fig:butterfly_graph}.

For a finite sequence $\gamma = e_1 \cdots e_k$ of edges in $G$ and for a given pair of integers
$0 \leq s \leq t \leq k$, we define the transition matrix $P_\gamma(t : s)$ for $t \ge s + 1$ as {the left product of local stochastic matrices from $s+1$ to $t$, given by}:
\begin{equation}\label{eqn:state_transition_over_walk}
P_{\gamma}(t : s) := A_{e_t} A_{e_{t-1}}\cdots A_{e_{s+1}}
\end{equation}
We set $P_\gamma(t : s) = I$ for $t \leq s$. Then, we have the following update for the state vector $x$ at $s$:
\begin{equation}\label{eqn:state_transition}
x(t) = P_\gamma(t:s) x(s). 
\end{equation}
We will simply write $P_\gamma$ for $P_{\gamma}(t : s)$ when clear from the context.



\begin{figure}
\centering
\begin{tikzpicture}[scale=0.9]
		\node [circle,fill=black,inner sep=1pt,label=above:{\footnotesize $v_1$}] (b_i) at (3, 0) {};
		
		\node [circle,fill=black,inner sep=1pt,label=above:{\footnotesize $v_2$}] (b_j) at (1, 0.75) {};

		\node [circle,fill=black,inner sep=1pt,label=below:{\footnotesize $v_3$}] (b_l) at (1, -0.75) {};
        \node [circle,fill=black,inner sep=1pt,label=above:{\footnotesize $v_4$}] (b_g) at (5, 0.75) {};
        \node [circle,fill=black,inner sep=1pt,label=below:{\footnotesize $v_5$}] (b_q) at (5, -0.75) {};
	    
        \node [circle,fill=black,inner sep=1pt,label=below:{\footnotesize $v_6$}] (b_y) at (7, -0.75) {};
        \node [circle,fill=black,inner sep=1pt,label=above:{\footnotesize $v_7$}] (b_u) at (7, 0.75) {};

   \path[draw,every node/.style={sloped,anchor=south,auto=false},shorten >=2pt,shorten <=2pt]
		(b_i) edge[<->] node {$A_{12}$}  (b_j)

        (b_j) edge[ <->] node {$A_{23}$} (b_l)
        (b_l) edge[<->] node {$A_{31}$} (b_i)	

        (b_i) edge[<->] node {$A_{14}$} (b_g)
        (b_g) edge[<->] node {$A_{45}$} (b_q)
        (b_i) edge[<->] node {$A_{15}$} (b_q)

        (b_y) edge[<->] node {$A_{56}$} (b_q)
        (b_y) edge[<->] node {$A_{76}$}(b_u)
        (b_u) edge[<->] node {$A_{57}$} (b_q)

		 ;
\end{tikzpicture}
\caption{The graph $\vec{G}$ } 
\label{fig:butterfly_graph}
\end{figure}
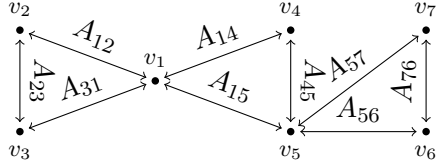

\textbf{Holonomy in the Network.} We employ the concept of holonomy—a powerful algebraic tool that captures the cyclic behavior of weight vector evolution in gossip processes~\citep{chen2022gossip}. 
For a given cycle $C$ and a weight vector $w$, the process which evolves around the cycle $C$ is said to be $w$-holonomic for $C$ if the weight vector $w$ does not change after completing the cycle (i.e. multiplying by $P_C$) or it does {\em change after completing the cycle} once but comes back to the initial value after completing the cycle finite $k$-times. To be more precise:

\begin{defn}\citep[Holonomic Stochastic Matrices]{bayram2023vector}\label{def:non-trivial} Let $C$ be a cycle in $\vec{G}$ of length greater than $2$ and $w^\top \in \operatorname{int}\Delta^{nm-1}$ be a weight vector. The {\em $w$-order} of $C$ is defined as 
$$
\operatorname{ord}_wC := \min \{k\geq 1: {w}={w}( P_C)^k\},
$$
and $\operatorname{ord}_w C=0$ if the set is empty. The local stochastic matrices $A_{e}$,  $e \in C$,  are said to be \textbf{$w$-holonomic for $C$} if there exists a weight vector $w$ such that $\operatorname{ord}_wC$ is finite and non-zero.
\end{defn}
The local stochastic matrices $A_{e}$ are {\em $w$-holonomic for $G$} if there exists a {\em common} weight vector $w$ so that the $A_e$'s are $w$-holonomic for all $C \in \vec{\mathcal{C}}$ of length greater than $2$. 

For a given graph $G = (V, E)$ (satisfying two additional topological conditions: $2$-edge connected and simple), Theorem 1 in \citep{bayram2023vector} shows that if the set of local stochastic matrices $\{A_e , e \in E\}$ is $w$-holonomic for $G$, the infinite product of stochastic matrices associated with the edges in $E$ converges to a limit set with finite cardinality, which depends on the weight vector $w$, provided that an allowable sequence of updates is followed. Note that this result is strong, as it establishes the existence of infinitely many allowable sequences, each of which can be followed in a decentralized manner.




Let $w=[\alpha^1_1,\alpha^1_2,\cdots,\alpha^1_m,\alpha^2_1,\cdots,\alpha^n_{m-1},\alpha^n_{m}]^\top \in \operatorname{int}\Delta^{nm-1}$ be the given weight vector (by the user) for averaging, where $\alpha^i_k$ denotes the weight assigned to the $k$th entry of agent $i$ at consensus. Let $\{1,\ldots, nm\}$ be a set of indices that label the entries of the weight vector $w$. We say that index $k$ in the index set belongs to an agent, namely agent $i$, if 
$$ (i-1)m +1 \leq k \leq  im.
$$
In this work, we consider that a set $\pi$ is given by the user, $\pi \subset 2^{\{1,\ldots,nm\}}$ is a partition of $\{1,\ldots,nm\}$. We denote by $\pi_a, a=0,\ldots, \ell$, the elements of $\pi$ where $\ell+1$ is the cardinality of $\pi$, also denoted by $|\pi|$. We define the subvector of $w$ induced by the element $\pi_a$ of partition $\pi$ as the vector of dimension $|\pi_a|$, consisting of the entries from $w$ indexed by $\pi_a$, ordered in ascending order. Each subvector is normalized by dividing its entries by their sum, ensuring it is a probability vector in $\operatorname{int}\Delta^{|\pi_a|-1}$. Each element $\pi_a$ of the partition corresponds to a consensus cluster, and the consensus value at the limit is the weighted average of the initial state vector, with weights provided by the subvector of $w$ induced by $\pi_a$.

\section{Main Results}


\textbf{Admissible Partition of Index Set on $G$} We need to test whether a given partition $\pi$ of an index set ensures that the proposed consensus clusters can be realized and that the specified indices can contribute to their respective clusters. Therefore, we introduce the concept of a derived graph to formalize how indices within the same partition remain connected. We then define admissible partition of the index set based on this concept.
\begin{defn}[Derived Graph $\mathcal{D}^G(\cdot)$ Function]\label{def:derived_graph_w}
Let $G = (V, E)$ be a {matrix-weighted} graph on $n$ nodes. Let $\pi=\{\pi_a\}_{a=0}^\ell$ be a partition of index set. For a given element $\pi_a$ of the partition of the index set $\pi$, the \textit{derived graph of $G$ on the element $\pi_a$ of the partition}, denoted by $\mathcal{D}^G(\pi_a) = (N_{\pi_a}, {E}_{\pi_a})$, is an undirected graph, possibly with multi-edges, where $N_{\pi_a} = \pi_a$. There is an edge $e_{kl} \in {E}_{\pi_a}$ between nodes $k$ and $l$ if one of the following holds:  
\begin{enumerate}
    \item Indices $k$ and $l$ belong to the same agent. 
    \item  There exist $(v_i,v_j) \in E$ where $i$ and $j$ are the agents to which the indices $k$ and $j$ belong, respectively.
\end{enumerate}
\end{defn}


If the derived graph of $G$ on all elements $\pi_a$ of the partition $\pi$, denoted by $\mathcal{D}^G(\pi_a)$, is strongly connected for all $a \in \{0, 1, \dots, \ell\}$, then we call the partition ${\pi}$ an {\em admissible partition on $G$}.

By using the derived graph, we can verify that there exists a path in $G$ connecting two agents such that every agent on the path has at least two entries whose indices belong to the same proposed consensus cluster. The existence of such a path ensures that information can propagate within the cluster. This approach eliminates the need for direct gossiping between every pair of agents within the cluster. Then, we provide the following example:




 

\begin{exmp}\label{exmp:derived}  Assume that $n=4$ and $m=2$. Let $G=(V,E)$ be the graph such that $V=\{v_1,v_2,v_3,v_4\}$ and $E=\{(v_1,v_2),(v_2,v_3),(v_3,v_4),(v_4,,v_1)\}$. In words, $G$ is a cyclic graph on $4$ nodes. Consider the partition $\pi = \{{\pi_0} = {\{1,3\}} , {\pi_1} = {\{2,4,5,7\}},{\pi_2}={\{6,8\}}  \}$ is given.  
Consider the derived graph $\mathcal{D}^G(\pi_1)$. It consists of four nodes corresponding to indices $\{2,4,5,7\}$. Index $2$ belongs to agent $1$, index $4$ belongs to agent $2$, index $5$ belongs to agent $3$, and index $7$ belongs to agent $4$. Since the graph $G$ contains edges connecting these agents, such as $(v_1,v_2)$ leading to $e_{24} \in E_{\pi_1}$, $(v_2,v_3)$ leading to $e_{45} \in E_{\pi_1}$, $(v_3,v_4)$ leading to $e_{57} \in E_{\pi_1}$, and $(v_4,v_1)$ leading to $e_{27} \in E_{\pi_1}$. It ensures that the derived graph $\mathcal{D}^G(\pi_1)$ is strongly connected (see Figure~\ref{fig:derived_combined}). The derived graph $\mathcal{D}^G(\pi_a)$ is strongly connected for all $a=0,1,2$, which implies that the partition ${\pi}$ is an {\em admissible partition on $G$}



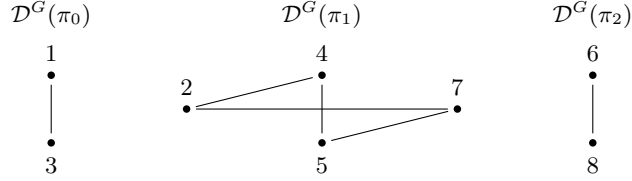
\begin{figure}
\centering
\begin{tikzpicture}[scale=0.9]
    \begin{scope}
        \node at (1,1.4) {\footnotesize $\mathcal{D}^G(\pi_0)$};
        
        \node [circle,fill=black,inner sep=1pt,label=above:{\footnotesize $1$}] (v1) at (1, 0.5) {};
        \node [circle,fill=black,inner sep=1pt,label=below:{\footnotesize $3$}] (v3) at (1, -0.5) {};

        \path[draw,shorten >=2pt,shorten <=2pt]
            (v1) edge[-] node {} (v3);
    \end{scope}

    \begin{scope}[xshift=3cm]
        \node at (2, 1.4) {\footnotesize $\mathcal{D}^G(\pi_1)$};

        \node [circle,fill=black,inner sep=1pt,label=above:{\footnotesize $2$}] (v2) at (0, 0) {};
        \node [circle,fill=black,inner sep=1pt,label=above:{\footnotesize $4$}] (v4) at (2, 0.5) {};
        \node [circle,fill=black,inner sep=1pt,label=below:{\footnotesize $5$}] (v5) at (2, -0.5) {};
        \node [circle,fill=black,inner sep=1pt,label=above:{\footnotesize $7$}] (v7) at (4, 0.0) {};

        \path[draw,shorten >=2pt,shorten <=2pt]
            (v2) edge[-] node {} (v4)
            (v4) edge[-] node {} (v5)
            (v5) edge[-] node {} (v7)
            (v2) edge[-] node {} (v7);
    \end{scope}

    \begin{scope}[xshift=8cm]
        \node at (1, 1.4) {\footnotesize $\mathcal{D}^G(\pi_2)$};

        \node [circle,fill=black,inner sep=1pt,label=above:{\footnotesize $6$}] (v6) at (1, 0.5) {};
        \node [circle,fill=black,inner sep=1pt,label=below:{\footnotesize $8$}] (v8) at (1, -0.5) {};

        \path[draw,shorten >=2pt,shorten <=2pt]
            (v6) edge[-] node {} (v8);
    \end{scope}
\end{tikzpicture}
\caption{Derived graphs $\mathcal{D}^G(\pi_0)$, $\mathcal{D}^G(\pi_1)$, and $\mathcal{D}^G(\pi_2)$ }
\label{fig:derived_combined}
\end{figure}

\end{exmp}


\textbf{Permutation Blocks for $A_e$.} 
From Definition~\ref{def:non-trivial}, we know that some cycles can have $w$-order greater than $1$. In this section, we present a lemma to show that it is sufficient to put a permutation matrix into the state transition matrix of a given cycle $C$ to have the $w$-order of the cycle $C$ is greater than $1$. Note that having $w$-order of a cycle $C$ greater than $1$ is not a necessary condition for having $w$-holonomy for $G$. Therefore, in this section, we provide a design choice for the user by using permutation matrices. 

A stochastic matrix $A$ has {\em a permutation block for an index set $\pi_a$} if the principal block submatrix of $A$ with rows/columns indexed by $\pi_a$ is a permutation matrix with dimension ${|\pi_a|}$. Note that we allow local stochastic matrices to have entries as $\{0,1\}$. This might lead to have permutation blocks in local stochastic matrices and lead permutation blocks at limit for certain indices (see~\cite[Theorem 1]{bayram2023vector}) and the state vector entries labeled by these indices do not contribute to weighted average at the limit. 

Given this, we ask the user to fill $\pi_0$ with the indices of the entries of the state vector that are desired to be in the permutation block at the limit (this set can be empty, as will be discussed later). For a non-empty $\pi_0$, we denote by $w_0$ the subvector of a given weight vector $w$ induced by $\pi_0$ and by $\tilde{w}$ the remaining subvector of $w$ (i.e. induced by $\cup_{a=1}^\ell \pi_a$). For a given cycle $C$, we denote by $P$ and $M$ the principal block submatrices of $P_C$ whose entries are indexed by $\pi_0$ and $\cup_{a=1}^\ell \pi_a$, respectively. Then, we have the following lemma:
\begin{lem}\label{lem:P_c_block}
If $P$ is a permutation matrix with order ${|\pi_0|}$ such that $w_0 P \neq w_0 $ and $M$ is a stochastic matrix such that $\tilde w M=\tilde w$, that is, it holds that (up to labelling): 
\begin{equation}\label{eqn:factorization_Pc}
P_C = \begin{blockarray}{ccc}
\pi_0 & \cup_{a=1}^\ell \pi_a \\
\begin{block}{[cc]c}
P & \mathbf{0} & \pi_0 \\
\mathbf{0} & M & \cup_{a=1}^\ell \pi_a \\
\end{block}
\end{blockarray},
\end{equation}
then the $w$-order of the cycle $C$ is strictly greater than $1$. 
\end{lem}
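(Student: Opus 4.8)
The plan is to exploit the block-diagonal structure of $P_C$ in~\eqref{eqn:factorization_Pc} to reduce the question about the powers $(P_C)^k$ to two independent questions about the powers of $P$ and of $M$. Since the block form holds only \emph{up to labelling}, I would first observe that reordering the indices so that those in $\pi_0$ come first amounts to conjugating $P_C$ by a permutation matrix while reordering the entries of $w$ by the same permutation; because this reordering is applied simultaneously to $w$ and to $P_C$, the identity $w = w(P_C)^k$ is preserved, so we may assume without loss of generality that $w = [\,w_0 \;\; \tilde w\,]$ and that $P_C$ is literally block-diagonal. I would also note that the conditions $w_0 P \neq w_0$ and $\tilde w M = \tilde w$ are invariant under rescaling each subvector by a positive constant, so whether $w_0,\tilde w$ are taken normalized or as the raw sub-blocks of $w$ is immaterial.

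The first key step is to rule out $\operatorname{ord}_w C = 1$. Using block multiplication, $w P_C = [\,w_0 P \;\; \tilde w M\,] = [\,w_0 P \;\; \tilde w\,]$, where the second equality uses $\tilde w M = \tilde w$. Since by hypothesis $w_0 P \neq w_0$, the top block of $w P_C$ differs from that of $w$, hence $w P_C \neq w$. This shows that $1$ does not lie in the set $\{k \geq 1 : w = w(P_C)^k\}$, so $\operatorname{ord}_w C \neq 1$.

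The second key step is to show that this set is non-empty, so that $\operatorname{ord}_w C \neq 0$ and the order is a genuine finite integer at least $2$. Here I would use that $P$, being a permutation matrix of dimension $|\pi_0|$, has finite multiplicative order: there exists $q \geq 1$ (the order of the underlying permutation) with $P^q = I$. Then $(P_C)^q$ is block-diagonal with blocks $P^q = I$ and $M^q$, and since $\tilde w M = \tilde w$ iterates to $\tilde w M^q = \tilde w$, we obtain $w(P_C)^q = [\,w_0 P^q \;\; \tilde w M^q\,] = [\,w_0 \;\; \tilde w\,] = w$. Thus $q$ belongs to the set, which is therefore non-empty. Combining with the previous step, $\operatorname{ord}_w C$ is finite, nonzero, and different from $1$, hence strictly greater than $1$.

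I expect the only real subtlety---the main obstacle---to be the bookkeeping around the ``order $0$'' convention: establishing $\operatorname{ord}_w C > 1$ requires not merely $w P_C \neq w$ but also that some positive power returns $w$ to itself, for otherwise the order would be $0$ rather than a value exceeding $1$. The permutation hypothesis on $P$ is exactly what supplies this finiteness, so the argument hinges on reading ``permutation matrix'' as ``finite-order element'' and not only as a source of the inequality $w_0 P \neq w_0$. The remaining points (invariance under relabelling and under normalization) are routine.
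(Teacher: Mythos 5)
Your proof is correct and takes essentially the same route as the paper: use the finite multiplicative order of the permutation block $P$ to produce a power $k$ with $w(P_C)^k = w$ (so the order is finite and nonzero), and the hypothesis $w_0 P \neq w_0$ together with $\tilde w M = \tilde w$ to exclude $k=1$. You are in fact slightly more careful than the paper's own argument, which exhibits $k$ with $P^k=I$ and directly asserts $\operatorname{ord}_w C = k$ without explicitly recording the step $wP_C = [\,w_0P \;\; \tilde w\,] \neq w$; your explicit check of that inequality, and your handling of the relabelling, are the right things to make precise.
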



\begin{rem}
The block structure given in the lemma is also a necessary condition for having $\operatorname{ord}_w C>1$, as discussed in~\citep[Lemma 14]{bayram2023vector}. However, here we only need to show that it is sufficient.
\end{rem} 


See the Section~\ref{sec:proof} for the proof of Lemma~\ref{lem:P_c_block}. Paraphrasing the statements says that if the matrix $P_C$ for a given cycle $C$ have a permutation block such that ${w}_0$ is not fixed point of $P_C$ and the remaining principal submatrix has a left eigenvector $\tilde{w}$ corresponding to eigenvalue $1$, then the $w$-order of the cycle $C$ is strictly greater than one. Furthermore, by construction, it holds $\tilde{w}M^k=\tilde{w}$ for any $k \in \mathbb{N}$. Therefore, we call the block $M$ {\em invariant blocks of $P_C$}.

Note that if the set $\pi_0$ is empty (by user's decision), then by the definition of subvector, we have $\tilde{w} = w$ and $P_C=M$. We consider only the construction of invariant blocks, which leads to $\operatorname{ord}_w C$ is $1$, (i.e. $wP_C=w)$.

\textbf{Invariant Blocks for $A_e$.} In this section, we provide an algorithm to construct the set of stochastic matrices $\{A_e \mid e \in G\}$ such that each local stochastic matrix $A_e$ ensures that the subvector of $w$ induced by $\pi_a$ is a left eigenvector corresponding to eigenvalue $1$ of the principal block submatrix of each $A_e$ whose entries indexed by the element $\pi_a$ of partition $\pi$ for all $a = 1, \cdots, \ell$. {We define a mapping $$\Psi: (i, k) \mapsto (i-1)m + k$$ that associates the entry $k \in \{1, \dots, m\}$ of the agent $i \in \{1, \dots, n\}$ with the corresponding each index $\ell\in \{1,2,\cdots,nm\}$.} We denote $nm$-by-$nm$ square matrix with $1$ at the $ij$th entry and $0$ elsewhere by $E_{ij}$. We introduce the definition of the rate matrix set:
\begin{defn}[Rate Matrix Set on $w$]\label{def:rms}
Let  $\mathcal{B}^{ij}_{kl}(w)$ be the set of $nm \times nm$ stochastic matrices for the $k$th entry of the agent $i$ and the $l$th entry of the agent $j$ such that, for all \( B \in \mathcal{B}^{ij}_{kl}(w) \), it satisfies  
\begin{align}\label{eqn:rms}\\[-2em]
     B = &(1-\beta_1) E_{\Psi(i,k),\Psi(i,k)} +  \beta_1 E_{\Psi(i,k),\Psi(j,l)} + \beta_2 E_{\Psi(j,l),\Psi(i,k)} \nonumber \\
        &+ (1 - \beta_2) E_{\Psi(j,l),\Psi(j,l)} + \sum_{g \neq \Psi(i,k),\Psi(j,l)} E_{g,g}\nonumber
\end{align}
where the parameters satisfy  ${\beta_1}/{\beta_2} = {\alpha^j_l}/{\alpha^i_k}$ with $\beta_1, \beta_2 \in (0,1).$
\end{defn}

Paraphrasing Definition~\ref{def:rms} says that the principal $2$-by-$2$ submatrix ,corresponding to the $k$th entry of the agent $i$ and the $l$th entry of the agent $j$, of a matrix in $\mathcal{B}^{ij}_{kl}(w)$  is given by 
\begin{align*}\\[-2em]
  \begin{bmatrix}
1-\beta_1 & \beta_1  \\
\beta_2 & 1-\beta_2
\end{bmatrix}  \mbox{ such that } \beta_1/\beta_2 = {\alpha^j_l}/{\alpha^i_k} \\[-2em]
\end{align*}
and the complementary principal submatrix of $B$ is the identity matrix $I_{2m-2}$. 

From \cite[Proposition 6]{chen2022gossip}, we know that for any pair of $\alpha^i_k, \alpha^j_l \in (0, 1)$, the set of rate matrices is non-empty, that is, there exists a rate matrix on every $w \in \operatorname{int} \Delta^{nm-1}$. One can easily see that there are infinitely many tuples $(\beta_1, \beta_2)$ satisfying the condition for fixed $(\alpha^j_l,\alpha^i_k)$.
A stochastic matrix $B$ constructed by~\eqref{eqn:rms} has one degree of freedom due to the constraint $\alpha^i_k \beta_1 = \alpha^j_l \beta_2$. Then, the set $\mathcal{B}^{ij}_{kl}(w)$ forms a one-dimensional manifold (which can be interpreted as a curve in the space of stochastic matrices) in the space of $nm \times nm$ matrices (which is an ambient space of $2nm$ dimensions).

\textbf{Algorithm.} Now, we can present Algorithm~\ref{alg:state_transition}.
\begin{algorithm}
\caption{Construction of a local stochastic matrix \( A_{ij} \) for a given partition ${\pi}$ and a given weight vector $w$}\label{alg:state_transition}
\begin{algorithmic}
\STATE \textbf{Input:} $ {\pi}=\{ \pi_a \}_{a=0}^\ell, w , \{i,j\} \gets v_iv_j$ 
\STATE \textbf{Output:} \( A_{ij} \)
\STATE \textbf{Initialize} \( A_{ij} = I \) 
\STATE $\triangleright  \mbox{Loop for the elements $\pi_a$ of partition }$

\FOR{$a = 1 \text{ to } \ell$}  
    \STATE $\triangleright  \mbox{Loop over the entries of the agent } i$
    \FOR{$k = 1 \text{ to } m$}  
        \STATE $\triangleright  \mbox{Loop over the entries of the agent } j$
        \FOR{$l = 1 \text{ to } m$}
            \IF{$(i-1)m + k  \in\pi_a  \mbox{ and } (j-1)m + l  \in\pi_a $}
                \STATE $ \alpha^j_l \gets w[(j-1)m + l] $ 
                \STATE $ \alpha^i_k \gets w[(i-1)m + k] $ 
    
                \STATE \( r \gets {\alpha^j_l}/{\alpha^i_k} \)
                \STATE Pick $\beta_1$ and $\beta_2$ 
                 such that $\beta_1 / \beta_2 = r$ with $\beta_1,\beta_2 \in (0,1)$
                \STATE Get a matrix $B^{ij}_{kl} \in \mathcal{B}^{ij}_{kl}(w)$ via equation~\eqref{eqn:rms} 
    
                \STATE Update the stochastic matrix:
                \STATE \hspace{1cm} \( A_{ij} \gets A_{ij} \cdot B^{ij}_{kl} \)
            \ENDIF
        \ENDFOR
    \ENDFOR
\ENDFOR
\RETURN $A_{ij}$
\end{algorithmic}
\end{algorithm}

\begin{rem}\label{rem:perm}
Note that the algorithm starts iterating from the element $\pi_1$ (that is, it ignores the element $\pi_0$ of the partition corresponding to the permutation block). Once the user decides to have permutation block at the limit (i.e. non-empty $\pi_0$), it is sufficient to place a (non-identity)  permutation matrix in the principal block submatrix of $A_{ij}$, whose entries are indexed by all $k$ and $l$ such that $\Psi(i,k) \in \pi_0$ and $\Psi(j,l) \in \pi_0$ such that the subvector of $w$ induced by $\pi_0$ is not a fixed point of this submatrix,(see Lemma~\ref{lem:P_c_block}). 
\end{rem}

\begin{thm}[Correctness of Algorithm~\ref{alg:state_transition}]\label{thm:holonomic}
Let $G=(V,E)$ be a simple, $2$-edge connected graph on $n$ nodes with matrix-valued edge weights $A_e$, $e \in E$. Let ${\pi}$ be a admissible partition of index set on $G$. Let $\{ A_e \in \mathbb{R}^{nm \times nm} \mid e \in E \}$ be the set of local stochastic matrices constructed using Algorithm~\ref{alg:state_transition}. Then, the set $\{ A_e \in \mathbb{R}^{nm \times nm} \mid e \in E \}$ is $w$-holonomic for $G$. 
\end{thm}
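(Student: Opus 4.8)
The plan is to reduce the global holonomy statement to a single, uniform, local fact: that every matrix produced by Algorithm~\ref{alg:state_transition} fixes the given weight vector $w$ on the left. Once this is established, the holonomy of all cycles follows almost for free, because by~\eqref{eqn:state_transition_over_walk} every $P_C$ is a product of such edge matrices.

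First I would verify the key building block. By Definition~\ref{def:rms}, each factor $B^{ij}_{kl}$ differs from the identity only in the two coordinates $\Psi(i,k)$ and $\Psi(j,l)$, where it restricts to the $2\times 2$ stochastic block whose off-diagonal entries are $\beta_1$ and $\beta_2$. A one-line computation using the defining constraint $\alpha^i_k\beta_1=\alpha^j_l\beta_2$ shows that the row vector $[\alpha^i_k,\ \alpha^j_l]$ is a left eigenvector of this block with eigenvalue $1$; since $B^{ij}_{kl}$ acts as the identity on every other coordinate, this yields $wB^{ij}_{kl}=w$. Because being fixed on the left is preserved under arbitrary products --- if $wB_1=w$ and $wB_2=w$ then $w(B_1B_2)=(wB_1)B_2=w$, irrespective of whether the factors commute or share a coordinate --- the matrix $A_{ij}=\prod_{k,l} B^{ij}_{kl}$ returned by the algorithm also satisfies $wA_{ij}=w$. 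The same factorization confirms that $A_{ij}$ is stochastic and is supported only on the coordinates of agents $i$ and $j$, so it has the required block form~\eqref{block}.

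Next I would assemble the global statement. Fix any cycle $C\in\vec{\mathcal{C}}$ of length greater than $2$; its transition matrix $P_C$ is a product of edge matrices $A_e$, each fixing $w$ by the previous step, so $wP_C=w$. Hence $w=w(P_C)^1$ and $\operatorname{ord}_w C=1$, which is finite and nonzero. Crucially, the very same $w$ works for every edge and hence for every cycle, since all the rate matrices are built from the ratios $\alpha^j_l/\alpha^i_k$ of the one global weight vector; this supplies the \emph{common} weight vector demanded by the definition of $w$-holonomy for $G$. This completes the argument in the case $\pi_0=\varnothing$.

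It remains to treat the permutation-block option of Remark~\ref{rem:perm}. If $\pi_0$ is nonempty and a non-identity permutation $P$ with $w_0P\neq w_0$ is placed on the $\pi_0$-block, then $P_C$ acquires the block-diagonal form~\eqref{eqn:factorization_Pc}, and Lemma~\ref{lem:P_c_block} already gives $\operatorname{ord}_w C>1$; finiteness then follows because a permutation matrix has finite multiplicative order $r$, so $(P_C)^r$ has $\pi_0$-block $P^r=I$ and invariant block $M^r$ with $\tilde wM^r=\tilde w$, whence $w(P_C)^r=w$ and $\operatorname{ord}_w C\le r$. In both cases $\operatorname{ord}_w C$ is finite and nonzero for every cycle, which is exactly $w$-holonomy for $G$. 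I expect the only genuine subtlety to be bookkeeping rather than mathematics: confirming that the eigenvector identity survives when several factors $B^{ij}_{kl}$ overlap in a shared coordinate (the same entry of agent $i$ paired with different entries of agent $j$), and noting that admissibility of $\pi$ --- while it is the property guaranteeing the clusters are genuinely realizable in the limit --- is not actually required to certify the holonomy property itself.
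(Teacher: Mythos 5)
Your proposal is correct and follows essentially the same route as the paper: establish that each rate matrix $B^{ij}_{kl}$ fixes $w$ on the left via the constraint $\alpha^i_k\beta_1=\alpha^j_l\beta_2$ (the paper's Lemma~\ref{lem:right_eigen_B}), propagate this through the product defining each $A_e$ and then through $P_C$ for every cycle, and conclude $\operatorname{ord}_w C=1$. You go slightly further than the paper's written proof by also treating the optional permutation-block case of Remark~\ref{rem:perm} and by correctly observing that admissibility of $\pi$ is not actually needed for the holonomy conclusion itself (the paper invokes it only to ensure Line~10 fires at least once, which is not logically required since $A_e=I$ would still fix $w$).
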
  
See Section~\ref{sec:proof} for the proof of Theorem~\ref{thm:holonomic}. We now provide an example for the algorithm.

\begin{exmp}[Cont.] Recall  the graph $G$ and admissible partition $\pi$ of index set on $G$ in Example~\ref{exmp:derived}. 

Let $w=[0.012, 0.209, 0.062, 0.027 , 0.050,
       0.081, 0.013, 0.544]$ be the weight vector given by the user. Now, consider the edge $e = v_1 v_2$. For $k = 1$, we have the index $1$ (where $\Psi(1,1)=1$), which is not in $\pi_1$. So, the condition in Line $10$ of Algorithm~\ref{alg:state_transition} is not satisfied. We pass to $k = 2$ (where $\Psi(1,2)=2$) and $l = 2$ (where $\Psi(2,2)=4$), we have the following values from the weight vector $w$:
\[
\alpha^1_2 = 0.209, \quad \alpha^2_2 = 0.027.
\]
Then, the rate $r =\alpha^2_2/ \alpha^1_2 = 0.129$. Using this rate, we select $\beta_1 = 0.082$ and $\beta_2 = 0.630$, since $\frac{\beta_1}{\beta_2} = 0.129$. By using equation~\eqref{eqn:rms}, we obtain the matrix $B$ as follows:
\begin{align*}
 \mathcal{B}^{12}_{22}(w) \ni B = 
\left[ \begin{smallmatrix} 
1.000 &   &   &   &   &   &   &   \\ 
  & 0.918 &   & 0.082 &   &   &   &   \\ 
  &   & 1.000 &   &   &   &   &   \\ 
  & 0.630 &   & 0.370 &   &   &   &   \\ 
  &   &   &   & 1.000 &   &   &   \\ 
  &   &   &   &   & 1.000 &   &   \\ 
  &   &   &   &   &   & 1.000 &   \\ 
  &   &   &   &   &   &   & 1.000 \\ 
\end{smallmatrix} \right]
\end{align*}
Note that only the principal $2$-by-$2$ submatrix whose entries are indexed by $\{2,4\}$ is different from the identity matrix. For the edge ${v_1v_2}$, there is no other pair that satisfies the condition in Line $10$. Now, we can then check the permutation block. Since the indices corresponding to the permutation block $\pi_0=\{1,3\}$ belong to agent $1$ and agent $2$ (i.e. $\Psi(1,1) = 1 \in \pi_0$ and $\Psi(2,1) = 3 \in \pi_0$), we can place a non-identity permutation matrix in the principal block submatrix of $B$ whose entries are indexed by $\{1,3\}$ (see Remark~\ref{rem:perm}). Then, we have the following final form of the local stochastic matrix:



\begin{align*}
 A_{12} = 
\left[ \begin{smallmatrix} 
 &   & 1.000  &   &   &   &   &   \\ 
  & 0.918 &   & 0.082 &   &   &   &   \\ 
 1.000 &   &  &   &   &   &   &   \\ 
  & 0.630 &   & 0.370 &   &   &   &   \\ 
  &   &   &   & 1.000 &   &   &   \\ 
  &   &   &   &   & 1.000 &   &   \\ 
  &   &   &   &   &   & 1.000 &   \\ 
  &   &   &   &   &   &   & 1.000 \\ 
\end{smallmatrix} \right]
\end{align*}
\end{exmp}
\section{Proof of Main Results}\label{sec:proof}



\begin{proof}[Proof of Lemma~\ref{lem:P_c_block}].
We assume that $P$ is a permutation matrix and that $w_0 P \neq w_0$. Then, there exists a finite $k > 1$ such that $P^k = I$, which implies that $w_0 P^k = w_0$. Now, we take the $k$-th power of~\eqref{eqn:factorization_Pc} and multiply both sides by the weight vector $w = [ w_0 , \tilde{w} ]$ on the left. Then we have the following:
\begin{align*}\label{eqn:proof_factor}
 \underbrace{[ w_0 , \tilde{w} ]}_{{w}} ({P}_C)^k &= [ w_0 , \tilde{w} ] \left[\begin{smallmatrix}
P  & 0 \\
0 &  M
 \end{smallmatrix}\right]^{k} \\
&= [w_0 (P)^{k}, \underbrace{\tilde{w} (M)^{k}}_{*} ]
\end{align*}
Additionally, we assume that $\tilde{w} M = \tilde{w}$, from which it follows that $*$ in \eqref{eqn:proof_factor} is also equal to $\tilde{w}$. Then, we have the following:
\begin{align*}
    w (P_C)^k = [ w_0 , \tilde{w} ] ({P}_C)^k &= [w_0 , \tilde{w}  ] = w
\end{align*}
From Definition~\ref{def:non-trivial}, $\operatorname{ord}_w C$ is $k$. It completes the proof.
\end{proof}

We need the following lemma:
\begin{lem}\label{lem:right_eigen_B}
For a given weight vector $w \in \operatorname{int}\Delta^{nm-1}$. It holds that
$$
        w B= w , \forall B \in \mathcal{B}^{ij}_{kl}(w)
$$
for any distinct pair $(i,k)$ and $(j,l)$.
\end{lem}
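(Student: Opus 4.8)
The plan is to prove the fixed-point identity $wB=w$ by a direct column-by-column computation, exploiting the fact that every matrix $B \in \mathcal{B}^{ij}_{kl}(w)$ coincides with the identity outside a single $2\times 2$ principal block. Writing $p := \Psi(i,k)$ and $q := \Psi(j,l)$ for the two distinguished indices, the definition in~\eqref{eqn:rms} shows that the only entries of $B$ differing from those of the identity matrix lie in rows and columns $p$ and $q$; concretely $B_{p,p}=1-\beta_1$, $B_{p,q}=\beta_1$, $B_{q,p}=\beta_2$, $B_{q,q}=1-\beta_2$, and $B_{g,g}=1$ for every other index $g$, with all remaining entries zero. Since $(i,k)$ and $(j,l)$ are distinct, we have $p\neq q$, so this block is genuinely off-diagonal and the manipulations below are well defined.

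First I would dispose of the trivial columns. For any column index $m\notin\{p,q\}$ one has $B_{g,m}=\delta_{g,m}$, so that $(wB)_m=\sum_g w_g B_{g,m}=w_m$; hence $w$ is left unchanged in every coordinate except possibly $p$ and $q$. Next I would treat the two nontrivial columns, recalling that the $p$-th and $q$-th entries of $w$ are exactly $\alpha^i_k$ and $\alpha^j_l$, respectively. The column-$p$ computation gives
\[
(wB)_p = w_p B_{p,p} + w_q B_{q,p} = \alpha^i_k(1-\beta_1) + \alpha^j_l\beta_2 = \alpha^i_k + \bigl(\alpha^j_l\beta_2 - \alpha^i_k\beta_1\bigr),
\]
and the column-$q$ computation gives
\[
(wB)_q = w_p B_{p,q} + w_q B_{q,q} = \alpha^i_k\beta_1 + \alpha^j_l(1-\beta_2) = \alpha^j_l + \bigl(\alpha^i_k\beta_1 - \alpha^j_l\beta_2\bigr).
\]

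The key step is to observe that the defining constraint $\beta_1/\beta_2 = \alpha^j_l/\alpha^i_k$ of Definition~\ref{def:rms} is equivalent to $\alpha^i_k\beta_1 = \alpha^j_l\beta_2$, which makes both parenthesized correction terms vanish. Thus $(wB)_p=\alpha^i_k=w_p$ and $(wB)_q=\alpha^j_l=w_q$, and combined with the trivial columns this yields $wB=w$, as claimed.

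Honestly there is no substantial obstacle here: the entire content of the lemma is that the ratio constraint imposed on $(\beta_1,\beta_2)$ is precisely the stationarity condition for the local averaging step, so the verification reduces to the two-line cancellation above. The only points requiring care are bookkeeping the index map $\Psi$ correctly so that the entries $w_p,w_q$ are identified with $\alpha^i_k,\alpha^j_l$, and confirming that the off-block entries of the relevant columns vanish, which is immediate from the identity structure of $B$ outside the $\{p,q\}\times\{p,q\}$ block.
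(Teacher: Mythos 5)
Your proof is correct and follows essentially the same route as the paper: both arguments reduce the claim to the $2\times 2$ principal block indexed by $\Psi(i,k)$ and $\Psi(j,l)$ (all other columns being those of the identity) and then verify that $[\alpha^i_k,\ \alpha^j_l]$ is fixed by that block using the constraint $\alpha^i_k\beta_1=\alpha^j_l\beta_2$. The only cosmetic difference is that you write the verification column-by-column while the paper does it as a single row-vector-times-matrix computation; also note your reuse of $m$ as a column index collides with the paper's $m$ for the state dimension.
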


\begin{proof}
From Definition~\ref{def:rms}, only the principal block $2$-by-$2$ submatrix of $B \in \mathcal{B}^{ij}_{kl}(w)$, whose entries are indexed by $\Psi(i,k)$ and $\Psi(j,l)$, and denoted by $\bar{B}^{ij}_{kl}$, is different from the identity matrix. Then, it is sufficient to show that $[ \alpha^i_k , \alpha^j_l]$ (i.e. the subvector induced by the set $\{\Psi(i,k), \Psi(j,l)\}$) is a left eigenvector of the submatrix $\bar{B}^{ij}_{kl}$ corresponding to the eigenvalue $1$.
\begin{align}
&=\begin{bmatrix}
\alpha^i_k & \alpha^j_l  
\end{bmatrix} 
\begin{bmatrix}
1-\beta_1 & \beta_1  \\
\beta_2 & 1-\beta_2 
\end{bmatrix} \\ 
&= \begin{bmatrix}
\alpha^i_k( 1-\beta_1 ) +  \alpha^j_l  \beta_2 , & \alpha^i_k \beta_1 +  \alpha^j_l  (1-\beta_2)  \\
\end{bmatrix} \\
&= \begin{bmatrix}
\alpha^i_k - \alpha^i_k \beta_1 +  \alpha^j_l   \beta_2 , & \alpha^i_k  \beta_1 +  \alpha^j_l - \alpha^j_l  \beta_2 
\end{bmatrix} = \begin{bmatrix}
\alpha^i_k & \alpha^j_l 
\end{bmatrix} 
\end{align} 
where it holds that $\alpha^i_k \beta_1 =  \alpha^j_l   \beta_2$ from Definition~\ref{def:rms}. It completes the proof.
\end{proof}
Paraphrasing, the statement says that the vector $w$ is a left eigenvector of any matrix $B$ in the rate matrix set on $w$, $\mathcal{B}^{ij}_{kl}(w)$, for any given pair $(i,k)$ and $(j,l)$, corresponding to the eigenvalue $1$.


\begin{proof}[Proof of Theorem~\ref{thm:holonomic}]
First, we show that the weight vector $w$ is a left eigenvector corresponding to eigenvalue $1$ of the matrix $A_e$ for any $e \in E$, constructed using Algorithm~\ref{alg:state_transition}. We have an admissible partition $\pi$ of the index set on $G$. From \ref{def:derived_graph_w}, the set of partitions either corresponds to the same agent or is connected through existing edges for gossiping. This implies that for each edge, there is at least one pair of indices that satisfies the condition stated in Line 10 of Algorithm~\ref{alg:state_transition}. Let $r$ be the total number of pairs of indices that satisfy the condition specified in Line 10. Then, we have the following:
\begin{align}\label{eqn:A_e_open}
        A_e = B_1 B_2 \dots B_r
\end{align}
where each $B_a$ is constructed using Equation~\eqref{eqn:rms} for the corresponding indices. This implies that each matrix $B_a$ belongs to the rate matrix set on $w$ for the corresponding indices, (i.e. $B_a \in \mathcal{B}^{ij}_{kl}(w)$). From Lemma~\ref{lem:right_eigen_B}, we know that $w B_a = w$ for any $a = 1, 2, \dots, r$. If we multiply both sides of~\eqref{eqn:A_e_open} by $w$ on the left, we get the following:
\begin{align}
        w A_e &=  w B_1 B_2 \cdots B_r \\
              &= w B_2 \cdots B_r \\
              & \vdots \\ 
              &= w 
\end{align}
Now, consider an arbitrary cycle $C=e_1 \cdots e_{b-1} e_b$, then, we have the following state transition matrix $P_C$: 
\begin{align}
     P_C &= A_{e_b}  A_{e_{b-1}} \cdots A_{e_{1}} 
\end{align}
If we multiply both side by $w$ on the left, then we have:
\begin{align}
     w P_C &= {w A_{e_b} } A_{e_{b-1}} \cdots A_{e_{1}}
\end{align}
Each matrix $A_e$ for $e \in C$ is constructed using Algorithm~\ref{alg:state_transition}. Thus, we have the following iteration: 
\begin{align}
     w P_C &= \underbrace{w A_{e_b} }_{w} A_{e_{b-1}} \cdots A_{e_{1}} \\
    &= \underbrace{w A_{e_{b-1}}}_{w} \cdots A_{e_1} \\
     &= w
\end{align}
From~\cite[Theorem 3.3]{merris2011graph}, we know that every node in a $2$-edge connected, simple graph $G$ is covered by at least one cycle. This shows that the repetition algorithm for all $e \in E$ leads to covering all cycles. It implies that $w P_C = w$ for all cycles $C$ in $\vec{\mathcal{C}}$. From Definition~\ref{def:non-trivial}, we show that the set of local stochastic matrices $\{ A_e, e \in E\}$ is $w$-holonomic for $G$. It completes the proof.  \end{proof}





\section{Conclusion}


In this work, we have proposed an algorithm to construct a set of local stochastic matrices governing the gossip process, ensuring convergence to multiple consensus with a given weight vector and consensus cluster partition using the concept of holonomy. Our method is based on the construction of stochastic matrices with a known left eigenvector. Our contributions offer a solution to the open problem of realizing gossip matrices for distributed control systems, enabling scalable consensus protocols. This algorithm can be applied to federated learning or other decentralized optimization problems, where allowing agents to collaborate on optimization tasks while reaching consensus based on predefined weights at the limit.

\bibliographystyle{agsm}        

\bibliography{holonomy}

\end{document}